\newtheorem{theorem}{Theorem}[section]
\newtheorem{corollary}{Corollary}[theorem]
\newtheorem{lemma}[theorem]{Lemma}
\title{Bioperational Multisets in Various Semi-rings}
\author{O.M. Cain \\ onnomawcain@gmail.com}
\date{July 2019}
\begin{document}

\maketitle
\textbf{Abstract.} One can find lists of whole numbers having equal sum and product. We call such a creature a \textit{bioperational multiset}. No one seems to have seriously studied them in areas outside whole numbers such as the rationals, Gaussian integers, or semi-rings. We enumerate all possible sum-products for a bioperational multiset over whole numbers and six additional domains.

\section{Introduction}
The numbers $1,2,$ and $3$ have the property that their sum is also their product. That is, $1+2+3=1\cdot 2\cdot 3=6$. As Matt Parker [1] has pointed out, this gives us as a strange sort of byproduct:
$$\log (1+2+3) = \log 1 + \log 2 + \log 3,$$
due to the identity
$$\log (ab)=\log a+\log b.$$

We coin the term \textit{bioperational} here to refer to any such multiset $\{a_i\}_{i=1}^n$ having an equal sum and product. That is to say,
$$\sum_{i=1}^n a_i = \prod_{i=1}^n a_i.$$
A \textit{multiset}, as can be guessed, is a set in which a number can occur multiple times (instead of just once or not at all).

There are some open conjectures about the number of bioperational multisets over $\mathbb{N}$ of length $n$ as $n$ gets bigger and bigger [2]. There is also a smattering of analysis available on math.stackexchange including uniqueness of solutions and connections with trigonometry [3][4][5][6]. One can also find a surprisingly complicated solution algorithm [7]. But it seems no one has formally categorized bioperational multisets over $\mathbb{N}$ (unless we count Matt's ``pseudoproof" and the passing comments of others).

In addition to $\mathbb{N}$, it is interesting to explore bioperational multisets in other environments. They are well-defined anywhere addition and multiplication are well-defined (hence in any semi-ring). We would suspect therefore to find these creatures lurking in $\mathbb{Z}, \mathbb{C}, \mathbb{F}_p, \text{GL}_n(\mathbb{R})$, and many other places (even in non-Abelian rings!). In this paper, we limit ourselves to analyzing bioperational multisets over
\begin{itemize}
    \item non-negative integers ($\mathbb{N}$) in Section 3,
    \item integers ($\mathbb{Z}$) in Section 4,
    \item general fields ($\mathbb{Q}, \mathbb{F}_p,$ etc) in Section 5,
    \item lunar integers ($\mathbb{L}$) in Section 6,
    \item Gaussian integers ($\mathbb{Z}[i]$) in Section 7,
    \item Eisenstein integers ($\mathbb{Z}[\omega]$) in Section 8,
    \item and integers with $\sqrt{2}$ appended ($\mathbb{Z}[\sqrt{2}]$) in Section 9.
\end{itemize}

\section{Some Definitions}
Firstly, we have to blow some dead leaves out of the way to see clearly. To do so requires the leafblower of \textit{vocabulary}. Suppose we have a bioperational multiset $S=\{a_i\}_{i=1}^n$. We say  
\begin{itemize}
    \item $S$ is \textit{trivial} if it contains only one element, 
    \item $S$ \textit{vanishes} if the sum-product is zero, and
    \item $S$ is \textit{minimal} if no proper subset of its terms forms a bioperational set of the same sum-product.
\end{itemize}
Note that we are using `trivial' differently than in [2].

All the examples we are considering are 1) integral domains and 2) Abelian. That means 1) if any two numbers have a product of zero then one or both of them must also be zero ($ab=0$ implies $a=0$ or $b=0$) And 2) the order we multiply stuff doesn't matter (so $ab=ba$). This has some consequences on our analysis.

1) In an integral domain, any vanishing bioperational multisets must contain zero -- which is rather boring. An analysis of vanishing bioperational multisets over non-integral domains might be interesting. In fact, the first example $(1,2,3)$ vanishes if we place it in $\mathbb{Z}/6\mathbb{Z}$. But the adventure of non-integral domains in general will be neglected here. 

2) Since the order of multiplication doesn't matter in our examples, we call our subjects bioperational \textit{multisets}. However, the author greatly hopes that bioperational multisets will be explored in non-Abelian rings (in which case they would be bioperational \textit{sequences}). The author would have loved to explore these objects in the quaternions ($\mathbb{H}$) themselves but was too ignorant for the attempt.

For two multisets $A$ and $B$ we let ``$A+B$" denote their  \textit{multiset sum} which is best explained with an example:
$$\{2,7,2,2,3\}+\{1,2,7,7\}=\{1,2,2,2,2,3,7,7,7\}.$$
In technical terms, we are summing the multiplicities of all elements involved. Similarly ``$A-B$" will denote subtracting multiplicites:
$$\{2,7,2,2,3\}-\{2,2,3\}=\{2,7\}.$$
We also use coefficients of multisets to denote scaling multiplicities. Or in other words
$$3\{2,5,5\}=\{2,2,2,5,5,5,5,5,5\}.$$

Lastly, to keep our equations less messy, for a multiset $S$ we will write $\sigma(S)$ for the sum of its elements and $\pi(S)$ for the product. They look nicer than $\sum_{i=1}^n a_i$ and $\prod_{i=1}^n a_i.$

\section{Non-negative integers ($\mathbb{N}$)}
We begin with 
\begin{theorem}
    There is exactly one non-vanishing bioperational multiset over $\mathbb{N}$ of length $n$ for $n=2,3,4$ with constructions
    $$2+2=2\cdot2=4,$$
    $$1+2+3=1\cdot2\cdot3=6,$$
    $$\text{and}\quad 1+1+2+4=1\cdot1\cdot2\cdot4=8.$$
    This confirms Matt's conjecture for $n=2$ and is stated without proof in [2].
\end{theorem}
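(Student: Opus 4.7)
The plan is to order the multiset, let $k$ be the number of entries exceeding $1$, and use a crude product-versus-sum inequality to force $k \leq 2$. A factoring trick then finishes each of the three cases in a single line.

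Since the multiset is non-vanishing and $\mathbb{N}$ has no zero divisors, none of the $a_i$ can be $0$, so we may assume $1 \leq a_1 \leq \cdots \leq a_n$. Write the entries that exceed $1$ as $b_1 \leq \cdots \leq b_k$. Then
$$\pi(S) \geq 2^{k-1}\, b_k \qquad \text{and} \qquad \sigma(S) \leq (n-k) + k\, b_k,$$
so the bioperational identity $\sigma(S) = \pi(S)$ forces $(2^{k-1} - k)\, b_k \leq n - k$. For $n \leq 4$ and $k \geq 3$ this contradicts $b_k \geq 2$, ruling out $k = 3, 4$. The cases $k = 0$ and $k = 1$ collapse the identity to $n = 1$ and $n - 1 = 0$ respectively, both impossible for $n \geq 2$.

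That leaves $k = 2$, where $\sigma(S) = \pi(S)$ rearranges to $(b_1 - 1)(b_2 - 1) = n - 1$. Factoring $n - 1 \in \{1, 2, 3\}$ in ordered positive integers yields the unique pairs $(2,2)$, $(2,3)$, and $(2,4)$, which together with the appropriate number of appended $1$'s recover the three stated constructions.

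The main obstacle is honestly not very deep: one must verify the inequality $(2^{k-1} - k)\, b_k \leq n - k$ against each admissible pair $(n, k)$ and then factor three small integers. The exponential growth of $\pi(S)$ against the linear growth of $\sigma(S)$ does all the real work, and Simon's favorite factoring trick does the rest.
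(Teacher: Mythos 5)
Your proof is correct and follows essentially the same route as the paper: force all but two entries to equal $1$ by comparing the (exponentially growing) product against the (linearly growing) sum, then finish with the factorization $(b_1-1)(b_2-1)=n-1$. The only difference is organizational --- you dispatch all three values of $n$ and all values of $k$ with one uniform inequality, whereas the paper peels off one unit entry at a time with a fresh ad hoc inequality for each $n$.
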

\begin{proof} 
    We first take $n=2$. Suppose we have $ab=a+b$. Rearrangement yields $ab-a-b+1=(a-1)(b-1)=1$. Since the only way to factor $1$ as two positive integers is $1=1\cdot 1$ it follows that $a-1=b-1=1$ or equivalently, that $a=b=2$.
    
    A phenomenal proof of $n=3$ was given by Mark Bennet in [3]. We repeat it here. Suppose we have $a+b+c=abc$. At least one term must be $1$ since if otherwise $a\ge b\ge c\ge 2$ and we would have
    $$3a\ge a+b+c=abc\ge 4a$$
    which is true only if $a\le 0$. But that is a contractiction since we are assuming $a\ge 2$.. 
    
    Okay, so let $c=1$. Then we have $a+b+1=ab$. Rearranging yields $ab-a-b+1=(a-1)(b-1)=2$. It follows that $\{a-1,b-1\}=\{1,2\}$ or equivalently, that $\{a,b\}=\{2,3\}$.
    
    Similarly for $n=4$, suppose $a+b+c+d=abcd$. Similar to the case $n=3$, we know at least one term must be $1$ since $a\ge b \ge c\ge d\ge 2$ would imply
    $$4a\ge a+b+c=abc\ge 8a.$$
    So let $d=1$. But we can play the same trick again. If we have $a\ge b\ge c \ge 2$ then
    $$3a+1\ge a+b+c+1=abc \ge 4a$$
    from which it follows that $a\le 1$ which again contradicts our assumption $a\ge 2$. Let $c=1$ also.
    
    We have $a+b+2=ab$. Rearranging yields $(a-1)(b-1)=3$ from which it follows $\{a,b\}=\{4,2\}$.
\end{proof}

One may be tempted to generalize this proof technique and keep tackling larger and larger $n$ (In fact, we wrote a program to do exactly this [7]. See [8] for another such solution algorithm). For example, $n=5$ yields
\begin{theorem}
    There are $3$ non-vanishing bioperational multisets over $\mathbb{N}$ of length $n=5$ with constructions
    $$1+1+2+2+2=1\cdot1\cdot2\cdot2\cdot2=8,$$
    $$1+1+1+3+3=1\cdot1\cdot1\cdot3\cdot3=9,$$
    $$\text{and}\quad 1+1+1+2+5=1\cdot1\cdot1\cdot2\cdot5=10.$$
\end{theorem}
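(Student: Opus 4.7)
The plan is to extend the bounding-and-factoring technique of Theorem 1.1 to $n=5$. Arrange the multiset as $a_1\ge a_2\ge a_3\ge a_4\ge a_5\ge 1$ (the lower bound of $1$ follows from non-vanishing, since any zero entry would force $\pi(S)=0$). The strategy is to iteratively force the smallest entries to equal $1$ until the surviving equation has few enough unknowns to be attacked by a Simon-style factoring trick.

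First I would show $a_5=1$. If instead $a_5\ge 2$, then $\pi(S)\ge 16 a_1$ while $\sigma(S)\le 5a_1$, which forces $a_1\le 0$, a contradiction. Setting $a_5=1$ and supposing $a_4\ge 2$ similarly gives $\pi(S)\ge 8a_1$ and $\sigma(S)\le 4a_1+1$, again absurd. So $a_4=a_5=1$ and the problem collapses to
$$a_1+a_2+a_3+2 = a_1a_2a_3,\quad a_1\ge a_2\ge a_3\ge 1.$$

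Next I would split on $a_3$. If $a_3=1$, the equation rearranges to $(a_1-1)(a_2-1)=4$, whose positive factorizations $4\cdot 1$ and $2\cdot 2$ recover $\{1,1,1,2,5\}$ and $\{1,1,1,3,3\}$. If $a_3\ge 2$, one more round of the bounding trick (assuming $a_3\ge 3$ yields $\pi\ge 9a_1$ against $\sigma\le 3a_1+2$) pins down $a_3=2$, leaving $a_1+a_2+4=2a_1a_2$. Multiplying through by $2$ and completing a Simon-style factorization gives $(2a_1-1)(2a_2-1)=9$; the ordered factor pair $(3,3)$ yields $a_1=a_2=2$ and hence $\{1,1,2,2,2\}$, while $(9,1)$ would force $a_2=1<a_3=2$ and is discarded.

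The only mild obstacle is this last case: because the coefficient of $a_1a_2$ in $a_1+a_2+4=2a_1a_2$ is $2$ rather than $1$, the direct analogue of the $(x-1)(y-1)=k$ trick used for $n=2,3,4$ does not apply, and one has to rescale first. Everything else is a more patient iteration of the arguments already deployed in Theorem 1.1.
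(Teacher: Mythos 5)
Your proof is correct, and it takes a genuinely different route from the paper: the paper's entire proof of this theorem is ``From computation,'' deferring to the program cited in [7], whereas you give a self-contained hand derivation. Your argument is exactly the ``tempting generalization'' the paper alludes to immediately after Theorem 3.1 --- iterate the bounding trick ($a_i \ge 2$ for the $k$ largest entries forces $2^{k-1}a_1 \le \pi(S) = \sigma(S) \le k a_1 + (5-k)$) to pin down $a_4 = a_5 = 1$, then case on $a_3$ and finish with a factoring identity. The one place where the template genuinely needs modification is the subcase $a_3 = 2$, where the equation $a_1 + a_2 + 4 = 2a_1a_2$ has leading coefficient $2$; your rescaling to $(2a_1-1)(2a_2-1)=9$ handles this cleanly, and discarding the factor pair $(9,1)$ via the ordering constraint $a_2 \ge a_3 = 2$ is valid. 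All three multisets $\{1,1,1,2,5\}$, $\{1,1,1,3,3\}$, $\{1,1,2,2,2\}$ are recovered and the case analysis is exhaustive. What your approach buys is a verifiable proof independent of trusting the computation; what the paper's approach buys is brevity and the implicit point (made by the subsequent Lemma 3.3 and Theorem 3.4) that cataloguing by sum-product is more illuminating than cataloguing by length, so grinding out each $n$ by hand is not the road the paper wants to travel.
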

\begin{proof}
    From computation.
\end{proof}

But there turns out to be an easier way to catalog all bioperational multisets. We first need a lower foothold (or ``lemma" as they're called).
\begin{lemma}
    The product of one or more real numbers, all greater than or equal to $2$, is greater than or equal to their sum. That is, if $a_i\ge 2$, $i=1,..,n$ then
    $$\prod_{i=1}^n a_i \ge \sum_{i=1}^n a_i.$$
\end{lemma}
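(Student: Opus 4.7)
The natural plan is induction on $n$. For the base case $n=1$ the inequality degenerates to $a_1 \ge a_1$, and for $n=2$ the claim $a_1 a_2 \ge a_1 + a_2$ follows immediately from the factorisation $(a_1-1)(a_2-1) \ge 1$, since each factor on the left is $\ge 1$ by hypothesis. This two-variable case will do the real work in the inductive step.

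For the inductive step, I would assume the inequality for $n$ numbers all at least $2$ and consider $a_1,\dots,a_{n+1}$. Let $P = \prod_{i=1}^n a_i$ and $S = \sum_{i=1}^n a_i$. Since each $a_i \ge 2$, the product $P$ is itself $\ge 2$ (in fact $\ge 2^n$), so we may apply the $n=2$ case to the two real numbers $P$ and $a_{n+1}$, obtaining $P \cdot a_{n+1} \ge P + a_{n+1}$. By the inductive hypothesis $P \ge S$, hence
\[
\prod_{i=1}^{n+1} a_i = P \cdot a_{n+1} \ge P + a_{n+1} \ge S + a_{n+1} = \sum_{i=1}^{n+1} a_i,
\]
completing the induction.

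The only subtle point — really the ``main obstacle,'' such as it is — is making sure the two-variable inequality can be invoked legally in the inductive step, i.e., that the intermediate quantity $P$ is itself $\ge 2$. This is why the hypothesis $a_i \ge 2$ (rather than $a_i \ge 1$) is essential, and it is worth flagging explicitly in the write-up. Everything else is routine algebra.
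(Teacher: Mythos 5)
Your proof is correct, but it takes a genuinely different route from the paper's. The paper also inducts on $n$, but its inductive step introduces the largest integer $k$ with $a_{n+1}>\pi(S)^k$ and then chains $\pi(S')=(a_{n+1}-1)\pi(S)+\pi(S)\ge \pi(S)^{k+1}+\sigma(S)>a_{n+1}+\sigma(S)$. You instead isolate the two-variable inequality $xy\ge x+y$ for $x,y\ge 2$ via the factorisation $(x-1)(y-1)\ge 1$, and apply it to the pair $\bigl(P,a_{n+1}\bigr)$, checking that $P\ge 2$ so the application is legal. Your version is cleaner and arguably more robust: the paper's intermediate claim that $a_{n+1}>\pi(S)^k$ forces $a_{n+1}-1\ge\pi(S)^k$ is delicate when the $a_i$ are arbitrary reals rather than integers (e.g.\ $a_{n+1}=2.5$, $\pi(S)=2$ gives $k=1$ but $a_{n+1}-1=1.5<2$), whereas your two-variable lemma needs nothing beyond $x,y\ge 2$ and works verbatim over the reals, which is exactly the generality the lemma is stated in. The one point you rightly flag --- that $P\ge 2$ must be verified before invoking the $n=2$ case --- is the only place where the hypothesis $a_i\ge 2$ (as opposed to $a_i\ge 1$) enters your argument, and you handle it explicitly.
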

\begin{proof}
    Induction will be used on $n$. The base case, $n=1$, of a single number is clearly true since every number is equal to itself -- and therefore is greater than or equal to itself ($a_1\ge a_1$).
    
    Next suppose we have some multiset $S=\{a_i\}_{i=1}^n$ for which the theorem statement is true. So $\pi(S) \ge \sigma(S).$ We have to show the theorem true for a new multiset $S'$ formed by appending a new element $a_{n+1}\ge 2$ since any multiset can be built up one element at a time.
    
    Let $k$ be the largest integer such that $a_{n+1}>\pi(S)^k.$ From this we grab two crimps,
    $$a_{n+1}-1\ge \pi(S)^k \quad \text{and}\quad a_{n+1}<\pi(S)^{k+1},$$
    with which the last bit of the proof can be shown easily.
    $$\pi(S')=a_{n+1}\pi(S) = (a_{n+1}-1)\pi(S) + \pi(S)\ge \pi(S)^k\pi(S) + \sigma(S) $$
    $$= \pi(S)^{k+1} + \sigma(S) > a_{n+1} + \sigma(S) = \sigma(S')$$
    Technically this is a bit overkill since we've shown $\pi(S') > \sigma(S')$ when all we needed was $\pi(S') \ge \sigma(S')$. Oh well.
\end{proof}

With that Lemma we can catalog all bioperational multisets over $\mathbb{N}$ by their sum-product.

\begin{theorem}
    For every composite integer $m\in\mathbb{N}$ there exists a non-trivial bioperational multiset over $\mathbb{N}$ with a sum-product of $m$.
\end{theorem}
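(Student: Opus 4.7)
The plan is a direct construction. Since $m$ is composite, I can write $m = ab$ with integers $a, b \geq 2$. My candidate multiset will be
\[ S = \{a, b\} + (m - a - b)\{1\}, \]
that is, the pair $\{a,b\}$ together with $m-a-b$ copies of $1$.

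Next I would verify the three required properties in turn. The product $\pi(S) = ab \cdot 1^{m-a-b} = m$ is immediate, since appending $1$'s does not change the product. The sum is $\sigma(S) = a + b + (m - a - b) \cdot 1 = m$, so $S$ is bioperational with sum-product $m$. Finally, $S$ contains at least the two elements $a$ and $b$, so it is non-trivial in the sense of Section~2.

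The only thing that is not purely formal is checking that the number of ones, $m - a - b$, is actually a non-negative integer, so that the construction makes sense over $\mathbb{N}$. This is precisely where the preceding Lemma does the work: applying it to the two-element multiset $\{a, b\}$ with $a, b \geq 2$ gives $ab \geq a + b$, i.e.\ $m - a - b \geq 0$. I expect this to be the only real step in the proof; once the lemma is invoked, the rest is bookkeeping. No case analysis on parity or primality of $a, b$ is needed, and the case $a = b = 2$ (giving $m = 4$ and zero $1$'s) is handled uniformly.
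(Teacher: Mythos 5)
Your proposal is correct and is essentially the paper's own argument: the paper factors $m = a_1\cdots a_k$ into $k \ge 2$ factors each at least $2$, pads with $d = \pi(S)-\sigma(S)$ ones, and invokes Lemma 3.3 to guarantee $d \ge 0$, exactly as you do. Your version simply fixes $k=2$, which suffices for this theorem (the paper keeps the factorization general because it reuses the construction in Corollary 3.5 to control lengths).
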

\begin{proof}
    Suppose a composite integer $m=a_1a_2...a_k$ with $k>1$ and $a_i\ge 2$ for $i=1,...,k$. Let $S=\{a_i\}_{i=1}^k$. By Lemma 3.3, we know $\pi(S)\ge \sigma(S)$. So let the non-negative integer $d=\pi(S) - \sigma(S)$ be their difference. The multiset
    $$S'=\{a_1,a_2,...,a_k,\overbrace{1,...,1}^{d\text{ times}}\}$$
    is bioperational with sum-product $m$ since
    $$\sigma(S') = a_1+...+a_k + \overbrace{1+...+1}^{d\text{ times}}=\sigma(S) + (\pi(S) - \sigma(S)) = \pi(S) = \pi(S')=m.$$
\end{proof}

From the proof of the previous theorem we can also make a statement about the lengths of bioperational multisets.

\begin{corollary}
    For every factorization of a composite integer $m=a_1a_2...a_k$ there exists a non-vanishing bioperational multiset over $\mathbb{N}$ of length $m + k - \sum a_i.$
\end{corollary}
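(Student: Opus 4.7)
The plan is to simply read off the length of the multiset $S'$ constructed in the proof of Theorem 3.4 and to verify that it indeed comes out to $m + k - \sum a_i$. First I would fix an arbitrary factorization $m = a_1 a_2 \cdots a_k$ with $k > 1$ and each $a_i \ge 2$ (so that Lemma 3.3 applies and the proof of Theorem 3.4 goes through verbatim). Set $S = \{a_i\}_{i=1}^k$ and $d = \pi(S) - \sigma(S) = m - \sum a_i$, which is a non-negative integer by Lemma 3.3. Then the multiset
$$S' = \{a_1, a_2, \ldots, a_k, \underbrace{1, \ldots, 1}_{d \text{ times}}\}$$
is, by the calculation already performed in Theorem 3.4, a bioperational multiset with sum-product $m$.

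Next I would count: the length of $S'$ is the number of factor-entries plus the number of appended ones, that is,
$$k + d = k + \bigl(m - \textstyle\sum a_i\bigr) = m + k - \sum a_i,$$
which is the desired length. Finally, non-vanishing is immediate because the sum-product is $m$, which is a composite integer and hence nonzero.

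There is no real obstacle here; the corollary is essentially a repackaging of the construction in Theorem 3.4. The only mildly delicate point is the hypothesis $a_i \ge 2$: if the statement is meant to permit factorizations containing $1$'s, one can simply verify that adjoining an extra $1$ to the factorization increases both $k$ and $\sum a_i$ by $1$, leaving the claimed length $m + k - \sum a_i$ unchanged, so the formula is robust against that convention.
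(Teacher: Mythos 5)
Your proposal is correct and matches the paper's own proof: both simply reuse the multiset $S'$ from Theorem 3.4 and count its $k+d = m+k-\sum a_i$ elements. The extra remarks on non-vanishing and on factorizations containing $1$'s are harmless additions but not needed.
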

\begin{proof}
    Let $S$ and $S'$ denote the same multisets as in the proof of Theorem 3.4. $S'$ is bioperational and contains $k+d = k+(\pi(S) - \sigma(S))=m+k-\sum a_i$ elements.
\end{proof}

Starting at $n=2$ the number of non-vanishing bioperational multisets over $\mathbb{N}$ of length $n$ is
$$1,1,1,3,1,2,2,2,2,3,2,4,2,...$$
(sequence A033178 in OEIS [8]). The positions of record in this list occur at 
$$n=2,5,13,25,37,41,61,85,113,181,361,421,433,...$$
(sequence A309230 in OEIS). The terms all appear to have fewer prime factors than their neighbors.

\section{Integers ($\mathbb{Z}$)}
An interesting thing happens once negatives are on the playing field. A multiset can be extended without changing either sum or product. Consider
$$S=\{1,2,3,-1,-1,1,1\}$$
which is bioperational with sum-product $6$. This is the first example of a non-minimal bioperational multiset. In $\mathbb{N}$ every non-vanishing bioperational multiset is also minimal. Not so in $\mathbb{Z}$!

Accordingly, we now use \textit{bioperation} as a verb as well. We say a multiset has been \textit{bioperated} if it has been made bioperational by means of changing its sum with appendages. For example, we may bioperate $S=\{3,-5\}$ which has a sum $\sigma(S)=-2$ and product $\pi(S)=-15$. Since appending $T=\{-1,-1,1\}$ decrements $\sigma(S)$ and fixes $\pi(S)$, bioperation is accomplished by just repeatedly appending $T$. In particular,
$$S'=S+ 13T$$
is bioperational. Note however $S'$ is not minimal. We can trim it down to minimality by shaving off groups of $\{-1,-1,1,1\}$ which have no effect on the sum-product obtaining
$$S''=S+13T-6\{-1,-1,1,1,1\}=\{3,-5,\overbrace{-1,...,-1}^{14\text{ times}},1\}$$
which is, in fact, minimal.

There are three important appendages in $\mathbb{Z}$ which fix the product.
\begin{center}
\begin{tabular}{ c|c|c } 
 label & appendage & $\Delta \sigma(S)$ \\ \hline
 $T_1$ & $\{1\}$ & $+1$ \\ 
 $T_0$ & $\{1,1,-1,-1\}$ & $0$ \\ 
 $T_{-1}$ & $\{1,-1,-1\}$ & $-1$ \\ 
\end{tabular}
\end{center}
We now give the parallel of Theorem 3.4 for $\mathbb{Z}$.
\begin{theorem}
    For every composite integer $m\in\mathbb{Z}$ there exists a non-trivial minimal bioperational multiset over $\mathbb{Z}$ with a sum-product of $m$.
\end{theorem}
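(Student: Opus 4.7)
The plan is to treat positive and negative composites separately, in each case by first producing some non-trivial bioperational multiset with sum-product $m$ and then descending to a minimal bioperational sub-multiset of the same sum-product. The descent itself is automatic: whenever the current multiset is not minimal it admits, by definition, a proper bioperational sub-multiset with sum-product $m$, and passing to it strictly decreases the multiset size, so the process terminates in finitely many steps at some minimal $S^{*}$ still bioperational with sum-product $m$.

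For positive composite $m$, Theorem 3.4 already delivers a non-trivial bioperational multiset $S \subseteq \mathbb{N}$ with sum-product $m$. Every sub-multiset of $S$ still lies in $\mathbb{N}$, so minimality over $\mathbb{Z}$ coincides with minimality over $\mathbb{N}$. Moreover, a non-vanishing bioperational multiset over $\mathbb{N}$ is automatically minimal: its elements are all strictly positive, so removing any one of them strictly decreases the sum, preventing any proper sub-multiset from having sum-product equal to $m$. Hence $S$ itself works.

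For negative composite $m$, factor $m = a_1 a_2 \cdots a_k$ with $k \geq 2$, each $|a_i| \geq 2$, and an odd number of the $a_i$ negative (for instance, prime-factor $|m|$ and flip the sign of a single prime). Let $S_0 = \{a_1,\ldots,a_k\}$ and $d = m - \sigma(S_0)$. Appending $d$ copies of $T_1$ when $d \geq 0$, or $|d|$ copies of $T_{-1}$ when $d < 0$, produces a bioperational multiset $S$ with sum-product $m$, because $T_1$ and $T_{-1}$ each multiply to $1$ and shift the sum by $+1$ and $-1$ respectively. A descent within $S$ yields a minimal bioperational sub-multiset $S^{*}$ with sum-product $m$.

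The main obstacle is verifying that $S^{*}$ is non-trivial. A single-element bioperational multiset must equal $\{m\}$, so $S^{*}$ could be trivial only if $m$ were among the elements of $S$. The elements of $S$, however, are either some $a_i$, for which $|a_i| \leq |m|/2 < |m|$ since $k \geq 2$ and every other factor has absolute value at least $2$, or $\pm 1$; none of these equals $m$, because a composite integer satisfies $|m| \geq 4$. Therefore $S^{*}$ is a non-trivial minimal bioperational multiset with sum-product $m$, completing the proof.
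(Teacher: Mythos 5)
Your proof is correct. The core construction coincides with the paper's: factor $m$ into $k\ge 2$ factors each of absolute value at least $2$, append copies of $T_{1}=\{1\}$ or $T_{-1}=\{1,-1,-1\}$ to fix the sum without disturbing the product, and then descend to a minimal bioperational sub-multiset. Where you diverge is in certifying that the minimal multiset $S^{*}$ is non-trivial. The paper's (rather terse) argument is that the elements discarded in passing to the minimal sub-multiset must have product $1$ --- since $\mathbb{Z}$ is an integral domain and $m\ne 0$ --- hence are all units, so every non-unit factor $a_i$ survives and the minimal multiset has at least $k\ge 2$ elements. You instead observe that a trivial bioperational multiset with sum-product $m$ could only be $\{m\}$, and rule this out because every element of $S$ is either some $a_i$ with $|a_i|\le |m|/2<|m|$ or is $\pm 1$, while $|m|\ge 4$. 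Both arguments are sound; the paper's yields the slightly stronger conclusion that all the chosen non-unit factors persist, and it is the version that transfers verbatim to $\mathbb{Z}[i]$, $\mathbb{Z}[\omega]$, and $\mathbb{Z}[\sqrt{2}]$ later in the paper, whereas yours is more elementary and needs no appeal to units or integral domains. Your separate treatment of positive $m$ (via Theorem 3.4 together with the observation that non-vanishing bioperational multisets over $\mathbb{N}$ are automatically minimal) is harmless but unnecessary, since your negative-case argument applies to positive composites unchanged.
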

\begin{proof}
    Choose a factorization $m=a_1...a_n$ with $n\ge 2$ where each $a_i$ may be positive or negative and $|a_i|\ge 2$ for $i=1,...,n$. The multiset $S=\{a_i\}_{i=1}^n$ has the desired product. Bioperate $S$ producing $S'$ such that $\sigma(S')=\pi(S')=\pi(S)$. This is done by appending $T_{\pm1}$ as needed. To be 
    
    Finally, if $S'$ is not minimal we may take a minimal bioperational multiset, $S''$, from it. $S''$ must include the non-units $a_1,...,a_n$ (a ``unit" by the way is a fancy name for a number with an inverse in its same ring; in this case $1$ and $-1$). Since $n\ge 2$ we are assured that $S''$ is non-trivial.
\end{proof}

\section{Fields}
Bioperational multisets turn out disappointingly abundant in fields.
\begin{lemma}
    Given any multiset $S=\{a_i\}_{i=1}^n$ whose elements are in a field $F$ and such that $\pi(S)\not=1$, one can bioperate $S$ into a unique multiset $S'$ by appending a single element,
    $$a_{n+1}=\frac{\sigma(S)}{\pi(S) - 1}.$$
    This was stated for $F=\mathbb{Q}$ and $n=4$ by Robert Israel in [6].
\end{lemma}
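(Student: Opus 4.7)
The plan is to convert the bioperationality condition directly into a linear equation in the unknown $a_{n+1}$ and solve it, using the field hypothesis precisely at the point where we need to divide.

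First I would write out what it means for $S' = S + \{a_{n+1}\}$ to be bioperational. Since $\sigma$ is additive and $\pi$ is multiplicative across the multiset union, we have $\sigma(S') = \sigma(S) + a_{n+1}$ and $\pi(S') = a_{n+1}\pi(S)$. Setting these equal yields
\[
\sigma(S) + a_{n+1} = a_{n+1}\pi(S),
\]
which rearranges to $\sigma(S) = a_{n+1}(\pi(S) - 1)$.

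Next I would invoke the hypothesis $\pi(S) \neq 1$: in a field this means $\pi(S) - 1$ is a nonzero element and therefore has a multiplicative inverse. Dividing both sides by $\pi(S) - 1$ gives the claimed formula $a_{n+1} = \sigma(S)/(\pi(S) - 1)$. Conversely, substituting this value back into $S'$ verifies that $\sigma(S') = \pi(S')$, so the formula does produce a bioperational multiset.

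For uniqueness, I would observe that the derivation above is an if-and-only-if: any single appendage $a_{n+1}$ making $S'$ bioperational must satisfy the same linear equation, and the linear equation has exactly one solution in the field because its leading coefficient $\pi(S) - 1$ is invertible. There is no real obstacle here; the only subtle point worth flagging is that the lemma says nothing when $\pi(S) = 1$, in which case the equation collapses to $\sigma(S) = 0$, which is either automatically satisfied by every appendage (if $\sigma(S) = 0$) or has no single-element solution at all, so uniqueness genuinely fails outside the stated hypothesis.
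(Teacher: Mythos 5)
Your proposal is correct and follows the same route as the paper: set $\sigma(S) + a_{n+1} = \pi(S)\,a_{n+1}$, rearrange, and divide by the invertible element $\pi(S)-1$. Your extra remarks on uniqueness and the degenerate case $\pi(S)=1$ are sound elaborations of what the paper leaves implicit.
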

\begin{proof}
    Any element $a_{n+1}\in F$ which might bioperate $S$ must satisfy $\sigma(S) + a_{n+1}=\pi(S) a_{n+1}.$ Rearranging yields $a_{n+1}=\frac{\sigma(S)}{\pi(S) - 1}$ which exists if $\pi(S)\not=1$.
\end{proof}

The lemma turns out to be an exhaustive description.
\begin{theorem}
    In any field, all non-trivial bioperational multisets can be produced with Lemma 5.1.
\end{theorem}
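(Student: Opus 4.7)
The plan is to reverse-engineer the construction in Lemma 5.1. Given a non-trivial bioperational multiset $S'=\{a_1,\dots,a_n\}$ with $n\geq 2$, I want to exhibit $S'$ as the output of the lemma applied to some sub-multiset of size $n-1$. That is, I need to peel off one element $a_j$, set $S=S'-\{a_j\}$, verify $\pi(S)\neq 1$, and check that the formula $\sigma(S)/(\pi(S)-1)$ recovers $a_j$.

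The key algebraic identity is essentially already inside Lemma 5.1. For any fixed $j$, write $\pi(S')=a_j\,\pi(S)$ and $\sigma(S')=a_j+\sigma(S)$. Bioperationality of $S'$ gives
\[
a_j+\sigma(S)=a_j\,\pi(S),
\qquad\text{i.e.}\qquad
a_j(\pi(S)-1)=\sigma(S).
\]
Whenever $\pi(S)\neq 1$, dividing yields $a_j=\sigma(S)/(\pi(S)-1)$, which is precisely the appended element produced by Lemma 5.1 starting from $S$. So, for such an index $j$, $S'$ is indeed produced by the lemma, and the theorem is proved.

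The only real obstacle is the degenerate possibility that no such $j$ exists, i.e.\ that $\pi(S'-\{a_j\})=1$ for every $j$. I would dispose of this by a short direct analysis. The condition $\pi(S')/a_j=1$ for all $j$ forces $a_j=\pi(S')$ for every $j$, so $S'=\{c,\dots,c\}$ with $c=\pi(S')$. Bioperationality then reads $nc=c^n$, and the degeneracy condition gives $c^{n-1}=1$ (so in particular $c\neq 0$). Combining these collapses the system to $(n-1)c=0$, hence $(n-1)\cdot 1_F=0$ in the field $F$. In characteristic zero this is impossible for $n\geq 2$, finishing the proof in the intended setting.

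The main thing to flag is that in positive characteristic dividing $n-1$, such a degenerate constant multiset genuinely exists (e.g.\ $\{1,1,1\}$ in $\mathbb{F}_2$) and cannot be produced from Lemma 5.1, since every proper sub-multiset has $\pi=1$. So the theorem as stated needs a mild qualification, or alternatively one should restrict to fields of characteristic zero, or else declare such constant multisets a separate family of exceptions alongside the lemma's output.
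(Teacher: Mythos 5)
Your argument is essentially the paper's own: both reduce to the degenerate case where deleting any single element leaves a sub-multiset of product $1$, deduce that all elements equal the common value $c=\pi(S')$ with $c^{n-1}=1$, and arrive at $n\cdot 1_F=1_F$. The one substantive difference is that you stop there honestly, whereas the paper's proof silently passes from $n\cdot 1_F = 1_F$ (equivalently $(n-1)\cdot 1_F=0$) to the conclusion that the integer $n$ equals $1$ --- a step that is only valid in characteristic zero. Your caveat is therefore not a mild qualification but an actual correction: $\{1,1,1\}$ in $\mathbb{F}_2$ is non-trivial, bioperational with sum-product $1$, and every proper sub-multiset has product $1$, so it cannot be produced by Lemma 5.1; the theorem as stated is false, and needs either a characteristic-zero hypothesis or an explicit exclusion of the constant multisets $\{c,\dots,c\}$ with $c^{n-1}=1$ and the characteristic of $F$ dividing $n-1$.
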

\begin{proof}
    Suppose we have some bioperational multiset $S=\{a_i\}_{i=1}^n$ which cannot be produced by the lemma. Let $S_i'$ be the multiset formed by removing $a_i$. It follows from the lemma $\pi(S_i')=\frac{\pi(S)}{a_i}=1$ for all $i=1,...,n$. This in turn implies $\pi(S)=a_i$ for $i=1,...,n$ and we see that all $a_i$ are equal. We therefore have a solution to 
    $$a_1^n=na_1.$$
    But dividing out an $a_1$ from both sides gives us $n=a_1^{n-1}=\pi(S_1')=1$ showing $S$ is trivial.
\end{proof}

Before leaving this territory, we note that there are solutions to $a^{n-1}=n$ leading to bioperational sets of a single value. Take for instance $\{2,2,2,2,2\}$ which is bioperational in $\mathbb{F}_{11}.$

\section{Lunar Integers ($\mathbb{L}$)}
The Lunar Integers are the only strictly \textit{semi}-ring to be considered. Their arithmetic is well analyzed in [10] (there called ``Dismal" Arithmetic) and Neil Sloane gives a wonderful introduction in a Numberphile interview [11]. 

We neglect to explain the arithmetic here ourselves. We need only note some properties of the number of digits. If we let $D(a)$ denote the number of digits of a lunar integer $a\in\mathbb{L}$, then
$$D(ab)=D(a)+D(b)-1\quad\text{and}\quad D(a+b)=\text{max}\{D(a),D(b)\}.$$

These give us
\begin{lemma}
    In any Lunar Bioperational Set, there is at most one element with $2$ or more digits.
\end{lemma}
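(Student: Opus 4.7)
The plan is to exploit the two digit-identities together with the defining equation $\sigma(S)=\pi(S)$ of a bioperational multiset. Since the sums and products are equal, their digit counts must also be equal, so the strategy is simply to compute $D(\sigma(S))$ and $D(\pi(S))$ separately and compare.

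First I would iterate the product identity: by induction on $n$, for $S=\{a_i\}_{i=1}^n$ we get
\[ D(\pi(S)) = \sum_{i=1}^n D(a_i) - (n-1). \]
Likewise, iterating the sum identity gives
\[ D(\sigma(S)) = \max_{1\le i \le n} D(a_i). \]
Both are routine inductions from the stated identities.

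Next, assuming $S$ is bioperational, equating these two expressions yields
\[ \sum_{i=1}^n D(a_i) - \max_i D(a_i) = n-1. \]
The left-hand side is the sum of the $n-1$ digit counts excluding the largest one. Since every lunar integer has $D(a_i) \ge 1$, this sum of $n-1$ positive integers equals $n-1$ only when every summand equals exactly $1$. Hence all elements except the one realizing the maximum have exactly one digit, which is precisely the claim.

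The only subtle point is making sure the digit identities extend correctly to multisets of arbitrary length (including handling of the base case $n=1$, where the claim is vacuous), and confirming that $D(a) \ge 1$ for every lunar integer under consideration. There is no real obstacle beyond bookkeeping; the heart of the argument is the simple counting identity above.
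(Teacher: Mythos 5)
Your proposal is correct and follows essentially the same route as the paper: compute $D(\sigma(S))$ as the maximum of the digit counts and $D(\pi(S))$ as $\sum_i D(a_i)-(n-1)$, equate them, and conclude from positivity of digit counts that all elements except one realizing the maximum have a single digit. The paper phrases the conclusion as $\sum_{i\ge 2}(D(a_i)-1)=0$ with nonnegative summands, which is the same counting step you perform.
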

\begin{proof}
    Suppose $S=\{a_i\}_{i=1}^n\subset\mathbb{L}$ is bioperational and that $D(a_1)\ge D(a_i)$ for $i=2,...,n$. From the aforementioned identities
    $$D(\sigma(S))=\text{max}\{D(a_i)\}_{i=1}^n=D(a_1)$$
    and
    $$D(\pi(S))=1+\sum_{i=1}^n (D(a_i)-1)=D(a_1)+\sum_{i=2}^n (D(a_i)-1).$$
    Since $D(\pi(S))=D(\sigma(S))$, it follows that $\sum_{i=2}^n (D(a_i)-1)=0$ and hence that $D(a_i)=1$ for $i=2,...,n.$
\end{proof}

Apparently bioperational multisets can't breathe well on the moon:
\begin{theorem}
    Every minimal bioperational multiset of Lunar integers is trivial.
\end{theorem}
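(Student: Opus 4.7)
My plan is to apply Lemma 6.1 to split into two cases, and in each non-trivial case exhibit an explicit proper bioperational sub-multiset with the same sum-product, contradicting minimality. Before splitting, I would record two small facts about lunar arithmetic: on single digits $a+b=\max(a,b)$ and $a\cdot b=\min(a,b)$; and multiplying a $d$-digit number $a$ by a single digit $c$ replaces each digit of $a$ by its minimum with $c$. Iterating the second fact, for a $d$-digit $a_1$ and single digits $a_2,\ldots,a_n$, digit $j$ of the lunar product $a_1 a_2\cdots a_n$ equals $\min(\mathrm{digit}_j(a_1),a_2,\ldots,a_n)$.

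Let $S=\{a_i\}_{i=1}^n$ be bioperational. By Lemma 6.1, either (Case 1) every $a_i$ is a single digit, or (Case 2) exactly one $a_i$, say $a_1$, has $d\ge 2$ digits and the rest are single digits. In Case 1 the formulas above give $\sigma(S)=\max_i a_i$ and $\pi(S)=\min_i a_i$, so the bioperational condition forces all $a_i$ equal to a common value $c$. Then any singleton $\{c\}$ formed from one term of $S$ is itself bioperational with sum-product $c$, so minimality forces $n=1$ and $S$ is trivial.

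In Case 2, assuming the non-trivial sub-case $n\ge 2$, I would compute $\sigma(S)$ and $\pi(S)$ digit by digit. Since only $a_1$ reaches positions $1,\ldots,d-1$ from the left, $\sigma(S)$ agrees with $a_1$ in those positions and has digit $\max(\mathrm{digit}_d(a_1),a_2,\ldots,a_n)$ at the units place, while $\pi(S)$ has digit $\min(\mathrm{digit}_j(a_1),a_2,\ldots,a_n)$ at each position $j$. Matching these forces, for $j<d$, the inequality $\mathrm{digit}_j(a_1)\le a_i$ for every $i\ge 2$, and at $j=d$ the simultaneous equalities $\mathrm{digit}_d(a_1)=a_2=\cdots=a_n$. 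Writing $c$ for that common value, every $a_i$ with $i\ge 2$ equals $c$, and $c$ dominates every digit of $a_1$, so a direct check shows $\sigma(S)=\pi(S)=a_1$. Hence the singleton $\{a_1\}$ is a proper sub-multiset of $S$ with the same sum-product, and $S$ fails to be minimal.

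The main obstacle I anticipate is the digit-level bookkeeping in Case 2: verifying exactly how each digit of $\pi(S)$ transforms under iterated single-digit multiplication and cleanly separating the units position from the higher positions when matching $\sigma$ against $\pi$. Once those digit-wise equations are pinned down, extracting the common value $c$ and checking that $\{a_1\}$ already realizes the full sum-product is essentially mechanical.
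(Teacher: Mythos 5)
Your proposal is correct and follows essentially the same route as the paper: apply Lemma 6.1, compare the units digit of the sum (a max) against that of the product (a min) to force all single-digit elements to equal the last digit of the longest element $a_1$, and conclude that the singleton $\{a_1\}$ is a proper bioperational sub-multiset with the same sum-product. Your digit-by-digit bookkeeping in Case 2 just makes explicit the verification that $\sigma(S)=\pi(S)=a_1$, which the paper leaves terse.
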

\begin{proof}
    We prove the contrapositive. Suppose $S=\{a_i\}_{i=1}^n\subset\mathbb{L}$ bioperational and non-trivial. From Lemma 6.1 we may assume $D(a_i)=1$ for $i=2,...,n.$ For $a\in\mathbb{L}$, let $F(a)\in\mathbb{L}$ denote be the last digit of $a$. From the definitions of addition and multiplication over $\mathbb{L}$
    $$\max\{F(a_1), a_2, ..., a_n\}=F(\sigma(S))=F(\pi(S))=\min\{F(a_i),a_2,...,a_n\}.$$
    But this implies $F(a_1)=a_2=...=a_n$. In which case the multiset $S'=\{a_1\}$ is trivially bioperational with the same sum-product as $S$ and hence $S$ is not minimal.
\end{proof}

So there are bioperational multisets in $\mathbb{L}$, like $\{17,7\}$ and $\{2,2,2\}$, but they aren't very interesting.

\section{Gaussian Integers ($\mathbb{Z}[i]$)}
Gaussian integers are numbers of the form $a+bi$ where $a$ and $b$ are integers and $i^2=-1$ (so like $2+3i$ or $-1-19i$ for example). In addition to the appendages $T_{-1},T_0,$ and $T_1$ given in Section 4, two more appear in $\mathbb{Z}[i]$,
$$T_{\pm2i}=\{\pm i, \pm i, -1, 1\},$$
which perturb the sum by $\pm 2i$ and fix the product. So sometimes bioperate the imaginary part of a multiset sum. 

Take, for instance, $S=\{1+2i, 2+3i\}$. We have $$\sigma(S)=3+5i\quad\text{and}\quad \pi(S)=-4+7i.$$
The difference is $\pi(S) - \sigma(S)=-7+2i$. We bioperate by 1) appending $T_{-1}$ seven times, 2) appending $T_{2i}$ once, and 3) shaving off $T_{0}$ until minimality is reached. The result is 
$$S'=\{1+2i, 2+3i, i, i, -1,-1,-1,-1,-1,-1,-1\}$$
which is bioperational with $\pi(S')=\sigma(S')=-4+7i.$

We need a couple lemmas before the result analogous to Theorem 3.4.
\begin{lemma}
    A Gaussian integer $a+bi$ is a multiple of $1+i$ if and only if $a$ and $b$ have the same parity (that is, are both odd or both even).
\end{lemma}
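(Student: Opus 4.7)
The plan is to prove both directions by a direct computation, pivoting on the fact that $(1+i)(1-i)=2$, so that dividing by $1+i$ amounts to multiplying by $1-i$ and halving.

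First I would set up the quotient. For any Gaussian integer $a+bi$, compute
\[
\frac{a+bi}{1+i} = \frac{(a+bi)(1-i)}{(1+i)(1-i)} = \frac{(a+b) + (b-a)i}{2}.
\]
Then $a+bi$ is a multiple of $1+i$ in $\mathbb{Z}[i]$ if and only if this quotient lies in $\mathbb{Z}[i]$, i.e., if and only if both $a+b$ and $b-a$ are even integers.

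Next I would argue that $a+b$ and $b-a$ are both even precisely when $a$ and $b$ share a parity. The forward implication is immediate since $a+b \equiv a-b \pmod{2}$, and if either is even then $a$ and $b$ have the same parity; the reverse is just as quick.

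I do not expect any real obstacle here; the entire argument is essentially one rationalization. If one prefers to avoid dividing, one can run the forward direction by expanding $(1+i)(c+di) = (c-d) + (c+d)i$ and noting $(c-d) + (c+d) = 2c$ is even, and the reverse direction by exhibiting the explicit quotient $c = (a+b)/2$, $d = (b-a)/2$, which is integral exactly under the parity hypothesis.
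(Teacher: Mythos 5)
Your proposal is correct and is essentially the same argument as the paper's: the paper's forward direction expands $(1+i)(c+di)=(c-d)+(c+d)i$ and its converse exhibits the quotient $\frac{a+b}{2}+\frac{b-a}{2}i$ (splitting off the even case via $(1+i)(1-i)=2$), which is exactly your rationalization written out in two pieces. No gaps; nothing further needed.
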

\begin{proof}
    Firstly, suppose $a+bi=(1+i)(c+di)$ is a multiple of $1+i$. Then $a=c-d$ and $b=c+d$. $a$ and $b$ therefore have the same parity since $b=a+2d$.
    
    Conversely, suppose $a$ and $b$ have the same parity. If both even, then we may write 
    $$a+bi=2\Big(\frac{a}{2}+\frac{b}{2}i\Big)=(1+i)(1-i)\Big(\frac{a}{2}+\frac{b}{2}i\Big)$$
    and are done. If both odd, then we may write
    $$a+bi=(1+i)\Big(\frac{a+b}{2}+\frac{b-a}{2}i\Big).$$
\end{proof}

\begin{lemma}
    For any Gaussian integers $\alpha_1,...,\alpha_n\in\mathbb{Z}[i]$ such that $1+i$ does not divide any $\alpha_i$,
    $$\text{Im}\Big(\prod \alpha_i\Big)\equiv\text{Im}\Big(\sum \alpha_i\Big)\mod 2.$$
\end{lemma}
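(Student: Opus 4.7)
The plan is to reduce everything modulo $2$ in the ring $\mathbb{Z}[i]$. First I would note that the hypothesis ``$1+i$ does not divide $\alpha_j$'' is tailor-made for Lemma 7.1: by that lemma it is equivalent to saying that $\text{Re}(\alpha_j)$ and $\text{Im}(\alpha_j)$ have opposite parities. Working in the finite ring $\mathbb{Z}[i]/(2)$, whose four residue classes are $0,\,1,\,i,$ and $1+i$, the parity condition forces every $\alpha_j$ to reduce modulo $2$ to either $1$ (when $\text{Re}(\alpha_j)$ is odd) or $i$ (when $\text{Im}(\alpha_j)$ is odd); the classes $0$ and $1+i$ are excluded.

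Let $k$ denote the number of indices $j$ with $\alpha_j\equiv i\pmod 2$, i.e., the count of $\alpha_j$ with odd imaginary part. On the sum side, $\text{Im}(\sum \alpha_j)\equiv k\pmod 2$ is immediate, since only these $k$ terms contribute odd amounts to the total imaginary part. On the product side, reducing modulo $2$ gives $\prod\alpha_j\equiv i^k\pmod 2$, and a brief case check on $k\bmod 4$ shows that $\text{Im}(i^k)$ equals $0$ when $k$ is even and $\pm 1$ when $k$ is odd. Combined with the fact that any element of $2\mathbb{Z}[i]$ has even imaginary part, this upgrades to $\text{Im}(\prod\alpha_j)\equiv k\pmod 2$.

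Comparing the two congruences yields $\text{Im}(\prod\alpha_j)\equiv\text{Im}(\sum\alpha_j)\pmod 2$. I do not foresee any serious obstacle; the only care needed is confirming that reduction modulo $2$ in $\mathbb{Z}[i]$ commutes with reading off the imaginary part modulo $2$, which follows at once from writing $\alpha=(a\bmod 2)+(b\bmod 2)i+2\gamma$ for some $\gamma\in\mathbb{Z}[i]$.
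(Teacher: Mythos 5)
Your proof is correct and follows essentially the same route as the paper: both reduce modulo $2$, invoke Lemma 7.1 to see that each $\alpha_j$ has residue $1$ or $i$ in $\mathbb{Z}[i]/(2)$, and then observe that the imaginary part mod $2$ of both the sum and the product equals the count of residue-$i$ factors mod $2$. The paper merely packages this bookkeeping as a map $\varphi(a+bi)=\overline{b\ \%\ 2}$ that is additive and also multiplicative-to-additive on the allowed residues, which is the same computation as your $i^k$ case check.
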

\begin{proof}
    Let $\varphi(a+bi)=\overline{b\ \%\ 2}\in\mathbb{F}_2.$ It follows that $\varphi(\alpha+\beta)=\varphi(\alpha)+\varphi(\beta)$. But more interestingly, it turns out that when neither of $\alpha$ nor $\beta$ are multiples of $1+i$ we have also $\varphi(\alpha\beta)=\varphi(\alpha)+\varphi(\beta)$. From lemma 7.1 it follows that the residues of $\alpha$ and $\beta$ in $\mathbb{Z}[i]/(2)\cong\mathbb{F}_2[i]$ are in $\{1,i\}$. It is enought to check that $\varphi$ has the desired property on $\{1,i\}:$
    $$0=\varphi(1)=\varphi(1\cdot 1)=\varphi(1)+\varphi(1)=0+0=0$$
    $$1=\varphi(i)=\varphi(1\cdot i)=\varphi(1)+\varphi(i)=0+1=1$$
    $$0=\varphi(-1)=\varphi(i\cdot i)=\varphi(i)+\varphi(i)=1+1=0$$
    The lemma follows noting
    $$\overline{\text{Im}\Big(\prod \alpha_i\Big)\ \%\ 2}=\varphi\Big(\prod \alpha_i\Big)=\sum \varphi(\alpha_i)=\varphi\Big(\sum \alpha_i\Big)=\overline{\text{Im}\Big(\sum \alpha_i\Big)\ \%\ 2}.$$
\end{proof}

The enzymes of $\mathbb{Z}[i]$ have been assembled. We are ready to digest the theorem.
\begin{theorem}
    For every $\mu\in\mathbb{Z}[i]$ which factors into non-units (i.e. $\mu=\alpha\beta$ with $\alpha,\beta\not\in\{1,-1,i,-i\}$) there exists a non-trivial minimal bioperationl multiset over $\mathbb{Z}[i]$ with a sum-product of $\mu$.
\end{theorem}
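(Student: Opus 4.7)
The plan is to adapt the strategy of Theorem 4.1. Start with any factorization $\mu = \alpha_1 \cdots \alpha_n$ with $n \geq 2$ and each $\alpha_i$ a non-unit; the hypothesis guarantees this with $n=2$. Set $S = \{\alpha_i\}_{i=1}^n$, so $\pi(S) = \mu$ already. The task reduces to appending product-preserving multisets to $S$ so that the total sum becomes $\mu$.

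Any product-preserving appendage must consist entirely of units $\pm 1, \pm i$. A direct count shows the achievable sum shifts $\delta$ are exactly the Gaussian integers in $\mathbb{Z} + 2i\,\mathbb{Z}$ --- the imaginary part is forced to be even, and the four building blocks $T_{\pm 1}, T_{\pm 2i}$ already generate this lattice. So bioperation works the moment $\mathrm{Im}(\mu - \sigma(S))$ is even, and by Lemma 7.2 this is automatic whenever no $\alpha_i$ is divisible by $1+i$.

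The remaining obstacle, and the only real one, occurs when $1+i \mid \mu$ and the imaginary discrepancy turns out to be odd. By the contrapositive of Lemma 7.2 some $\alpha_j$ must then be divisible by $1+i$, and my plan is to exploit this to flip the parity by the following move: replace $\alpha_j$ with $-i\,\alpha_j$ (still a non-unit) and insert a loose $i$ into the multiset. The product is unchanged, while the sum shifts by $-(1+i)\alpha_j + i$. Writing $\alpha_j = a + bi$ with $a \equiv b \pmod 2$, a brief computation gives this shift an imaginary part of $1 - (a+b)$, which is odd; hence the discrepancy turns even and the generic bioperation recipe applies.

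Finally, to produce a minimal multiset I would repeatedly delete sub-multisets of units with product $1$ and sum $0$ (copies of $T_0$, or pairs such as $\{i,-i\}$) until none remains. The non-units cannot be trimmed, since removing any of them would alter the product; thus at least $n \geq 2$ non-units survive and the resulting multiset is non-trivial. The only subtlety I foresee is the imaginary-parity obstruction in the second paragraph; once handled by the $-i\alpha_j$-plus-$i$ trick the rest is routine bookkeeping parallel to Theorem 4.1.
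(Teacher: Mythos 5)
Your proposal is correct and follows essentially the same route as the paper: bioperate the real discrepancy with $T_{\pm1}$ and an even imaginary discrepancy with $T_{\pm 2i}$, and when the imaginary parity is wrong invoke the contrapositive of Lemma 7.2 to find an $\alpha_j$ divisible by $1+i$ and absorb a unit into it to flip that parity before trimming to a minimal multiset that must retain the $n\ge 2$ non-units. Your parity-flipping move $\{-i\alpha_j,\, i\}$ is just a leaner variant of the paper's $\{i\alpha_j,\, i,\, -1\}$, and your explicit identification of the achievable sum-shift lattice as $\mathbb{Z}+2i\mathbb{Z}$ nicely fills in a step the paper leaves implicit.
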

\begin{proof}
    Pick some factorization $\mu=a_1...a_n$ and let $S=\{a_i\}_{i=1}^n$ with at least two $a_i$ non-units. We break into two cases.
    
    Case 1) if $\text{Im}(\pi(S))$ and $\text{Im}(\sigma(S))$ have the same parity, we may bioperate $S$ by appending $T_{\pm1}$ and $T_{\pm2i}$ as needed. The result is $S'$; bioperational with sum-product $\mu$. If $S'$ is not minimal, we may take a minimal subset $S''$. And we are assured $S''$ is non-trivial since otherwise $S''=\{\mu\}$ which implies $a_i=\mu$ for some. And $a_i=\mu$ implies all other $\alpha_j$ for $j\not=i$ are units since $\alpha_1...\alpha_{i-1}\alpha_{i+1}...\alpha_n=1$ (note we are using in this last step the fact that $\mathbb{Z}[i]$ is an integral domain).
    
    Case 2) if $\text{Im}(\pi(S))$ and $\text{Im}(\sigma(S))$ have different parities, we may suppose from Lemma 7.2 that some $\alpha_j$ is divisible by $1+i$. We create a new multiset by removing $\alpha_j$ from $S$ and appending $\{i\alpha_j,i,-1\}$. In notation,
    $$S'=\{\alpha_i\}_{i\not=j}+\{i\alpha_j,i,-1\}.$$
    The product remains unchanged since
    $$\pi(S')=\frac{\pi(S)}{\alpha_j}(i^2\alpha_j)(-1)=\pi(S).$$
    More importantly, it is claimed that $\text{Im}(\sigma(S'))$ and $\text{Im}(\sigma(S))$ have different parity. Their difference is
    $$\text{Im}(\sigma(S'))-\text{Im}(\sigma(S))=\text{Im}(\sigma(S')-\sigma(S))=\text{Im}(i\alpha_j+i-1-\alpha_j).$$
    Let $\alpha_j=a+bi$ for some integers $a$ and $b$. Substitution gives
    $$\text{Im}(\sigma(S'))-\text{Im}(\sigma(S))=\text{Im}(ai-b+i-1-a-bi)=a-b+1.$$
    From Lemma 7.1 we may suppose that $a$ and $b$ have the same parity since $1+i|\alpha_j$. It follows that $a-b+1$ is odd, that $\text{Im}(\sigma(S'))$ and $\text{Im}(\sigma(S))$ have different parity, and therefore that $\text{Im}(\sigma(S'))$ and $\text{Im}(\pi(S))=\text{Im}(\pi(S'))$ have the same parity. And so we return to the first case to bioperate $S'$.
\end{proof}

\section{Eisenstein Integers ($\mathbb{Z}[\omega]$)}
Eisenstein integers are similar to the Gaussians in that they are all of the form $a+b\omega$ where $a$ and $b$ are integers and $\omega$ is a strictly complex number such that $\omega^3=1$. Right away this gives us our first appendage,
$$T_{3\omega}=\{\omega,\omega,\omega\}.$$
One can show further show that $\omega^2=-1-\omega$ from which we get
$$T_{-2\omega}=\{-\omega,-1-\omega,-1,1,1\}.$$
Thus we have $T_\omega=T_{3\omega}+T_{-2\omega}$ and $T_{-\omega}=T_{3\omega}+2T_{-2\omega}$ at our disposal. Suprisingly, we can therefore bioperate any multiset over $\mathbb{Z}[\omega]$. Our main theorem in this section will therefore run almost identically to its analog over $\mathbb{Z}$.
\begin{theorem}
    For $\mu\in\mathbb{Z}[\omega]$ which factors into non-units there exists a non-trivial minimal bioperational multiset over $\mathbb{Z}[\omega]$ with a sum-product of $\mu$.
\end{theorem}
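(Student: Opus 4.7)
The plan is to imitate directly the proofs for $\mathbb{Z}$ (Theorem 4.1) and $\mathbb{Z}[i]$ (Theorem 7.3), but with the advantage that, unlike in the Gaussian case, there is no parity obstruction to worry about. The appendages listed just before the theorem already generate, by multiset sums, perturbations of the sum by $\pm 1$ (via $T_1, T_{-1}$) and by $\pm \omega$ (via $T_{\omega}, T_{-\omega}$), each of them fixing the product. Since $\{1,\omega\}$ is a $\mathbb{Z}$-basis for $\mathbb{Z}[\omega]$, any element $r+s\omega \in \mathbb{Z}[\omega]$ can be absorbed into $\sigma(S)$ by appending $|r|$ copies of $T_{\pm 1}$ and $|s|$ copies of $T_{\pm\omega}$ with the appropriate signs.

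Concretely I would proceed as follows. First, pick a factorization $\mu = a_1 a_2 \cdots a_n$ with $n \ge 2$ and each $|a_i|$ non-unit, and set $S = \{a_i\}_{i=1}^n$, so that $\pi(S) = \mu$. Write the defect as
$$\pi(S) - \sigma(S) = r + s\omega, \qquad r,s \in \mathbb{Z}.$$
Form $S'$ from $S$ by appending $r$ copies of $T_1$ (or $-r$ copies of $T_{-1}$ if $r < 0$), together with $s$ copies of $T_\omega$ (or $-s$ copies of $T_{-\omega}$ if $s < 0$). Every appended block fixes the product and shifts the sum by the advertised amount, so $\pi(S') = \pi(S) = \mu$ and $\sigma(S') = \sigma(S) + (r+s\omega) = \pi(S) = \mu$. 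Hence $S'$ is bioperational with sum-product $\mu$.

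Finally, pass to a minimal sub-multiset $S'' \subseteq S'$ with the same sum-product $\mu$. For non-triviality, I would argue exactly as in Theorem 7.3: if $S'' = \{\mu\}$ were trivial, then some original $a_i$ would equal $\mu$, forcing the product of the remaining $a_j$ ($j \neq i$) to equal $1$; since $\mathbb{Z}[\omega]$ is an integral domain, each such $a_j$ would then be a unit, contradicting our choice of at least two non-unit factors. Therefore $S''$ contains at least two elements and is a non-trivial minimal bioperational multiset with sum-product $\mu$.

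The main obstacle, if any, is bookkeeping with signs when $r$ or $s$ is negative, but this is purely mechanical because the two signed variants $T_{\pm 1}$ and $T_{\pm\omega}$ are both available as legitimate appendages. Unlike the Gaussian case there is no parity condition to split on, which is exactly the simplification advertised in the paragraph preceding the theorem statement.
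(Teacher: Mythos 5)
Your proposal is correct and follows essentially the same route as the paper: factor $\mu$ into at least two non-units, use the appendages $T_{\pm 1}$ and $T_{\pm\omega}$ (built from $T_{3\omega}$ and $T_{-2\omega}$) to close the gap $\pi(S)-\sigma(S)=r+s\omega$, then trim to a minimal multiset, which stays non-trivial because the non-unit factors cannot be removed. Your write-up simply fills in the bookkeeping that the paper's very terse proof leaves implicit.
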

\begin{proof}
    Choose a factorization $\mu=\alpha_1...\alpha_n$ with 2 non-units and let $S=\{\alpha_i\}_{i=1}^n$. Bioperate $S$ with $T_{\pm1}$ and $T_{\pm\omega}$. The resulting bioperational multiset $S'$ can be shaved down to minimality without becoming trivial since the non-units cannot be trimmed off.
\end{proof}

\section{Integers $\sqrt{2}$ Appended ($\mathbb{Z}[\sqrt{2}]$)}
Lastly, we consider the integer ring of a real quadratic number field. Numbers in $\mathbb{Z}[\sqrt{2}]$ are of the form $a+b\sqrt{2}$ where $a$ and $b$ are integers (again, very similar to $\mathbb{Z}[i]$ and $\mathbb{Z}[\omega]$). We use the fact that
$$(1+\sqrt{2})(-1+\sqrt{2})=1$$
to create 
$$T_{\pm2\sqrt{2}}=\{\pm1\pm\sqrt{2}.\,\mp1\pm\sqrt{2}\}.$$

There's good reason to think that this is the best we can do (I.e. that $T_{\pm \sqrt{2}}$ doesn't exist over $\mathbb{Z}[\sqrt{2}]$). But the best proof the author could come up with for such a fact uses difficult results about quadratic number fields and complicated induction. Instead, we take a route similar to that taken through $\mathbb{Z}[i]$.
\begin{lemma}
    The number $a+b\sqrt{2}\in\mathbb{Z}[\sqrt{2}]$ is a multiple of $\sqrt{2}$ if and only if $a$ is even.
\end{lemma}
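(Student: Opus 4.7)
The plan is to mirror the structure of Lemma 7.1 (the Gaussian analog), exploiting the fact that $\mathbb{Z}[\sqrt{2}]$ has $\{1,\sqrt{2}\}$ as a $\mathbb{Z}$-basis, so equality of two elements reduces to equality of rational and irrational parts separately.

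For the forward direction, I would suppose $a+b\sqrt{2} = \sqrt{2}(c+d\sqrt{2})$ for some integers $c,d$. Expanding the right-hand side yields $2d + c\sqrt{2}$, and comparing components gives $a = 2d$, so $a$ is even. This takes only one line.

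For the converse, I would suppose $a$ is even and write $a = 2k$ for some integer $k$. Then the explicit factorization
$$a+b\sqrt{2} \;=\; 2k + b\sqrt{2} \;=\; \sqrt{2}\,\bigl(b + k\sqrt{2}\bigr)$$
exhibits $a+b\sqrt{2}$ as a multiple of $\sqrt{2}$, with $b+k\sqrt{2}\in\mathbb{Z}[\sqrt{2}]$.

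There is no real obstacle here: the whole argument hinges on the uniqueness of the representation $a+b\sqrt{2}$, which follows from the irrationality of $\sqrt{2}$ (ensuring $\{1,\sqrt{2}\}$ is linearly independent over $\mathbb{Q}$). Unlike the Gaussian case, where both parities of $a,b$ had to be tracked because $1+i$ had norm $2$ and involved a nontrivial identity, here $\sqrt{2}$ itself already has norm $-2$ and the divisibility condition cleanly isolates the rational part. I would keep the proof short, matching the tone of Lemma 7.1.
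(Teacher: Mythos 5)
Your proof is correct and takes the same approach as the paper, which compresses the whole argument into the single observation $(c+d\sqrt{2})\sqrt{2}=2d+c\sqrt{2}$; you have merely unpacked both directions of that identity explicitly. No issues.
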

\begin{proof}
Note $(c+d\sqrt{2})\sqrt{2}=2d+c\sqrt{2}.$
\end{proof}

\begin{lemma}
    For numbers $\alpha_1,...,\alpha_n\in\mathbb{Z}[\sqrt{2}]$ let
    $$a+b\sqrt{2}=\sum\alpha_i\quad\text{and}\quad c+d\sqrt{2}=\prod\alpha_i.$$
    If no $\alpha_i$ is divisible by $\sqrt{2}$ then $b\equiv d\mod 2$.
\end{lemma}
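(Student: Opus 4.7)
The plan is to mimic the proof of Lemma 7.2 closely, with the role of the map $\varphi$ played by the map that extracts the parity of the $\sqrt{2}$-coefficient. Specifically, I would define $\varphi: \mathbb{Z}[\sqrt{2}] \to \mathbb{F}_2$ by $\varphi(a + b\sqrt{2}) = \overline{b \ \%\ 2}$. Additivity $\varphi(\alpha + \beta) = \varphi(\alpha) + \varphi(\beta)$ is immediate from the componentwise formula for addition in $\mathbb{Z}[\sqrt{2}]$ and is valid on the whole ring with no hypothesis needed.

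Next I would establish the conditional ``multiplicativity'' $\varphi(\alpha\beta) = \varphi(\alpha) + \varphi(\beta)$ under the hypothesis that neither $\alpha$ nor $\beta$ is divisible by $\sqrt{2}$. Writing $\alpha = a_1 + b_1\sqrt{2}$ and $\beta = a_2 + b_2\sqrt{2}$, a direct expansion gives $\alpha\beta = (a_1 a_2 + 2 b_1 b_2) + (a_1 b_2 + a_2 b_1)\sqrt{2}$, so $\varphi(\alpha\beta) \equiv a_1 b_2 + a_2 b_1 \pmod 2$. By Lemma 9.1, the non-divisibility hypothesis forces $a_1$ and $a_2$ both to be odd, so $a_1 b_2 + a_2 b_1 \equiv b_1 + b_2 \pmod 2$, which is exactly $\varphi(\alpha) + \varphi(\beta)$.

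I would then observe that the $a$-part of $\alpha\beta$ is $a_1 a_2 + 2 b_1 b_2$, which is again odd when $a_1, a_2$ are odd. So the class of elements not divisible by $\sqrt{2}$ is closed under multiplication, and the two-factor identity iterates: $\varphi(\prod \alpha_i) = \sum \varphi(\alpha_i)$ by a trivial induction on $n$. Combining with additivity of $\varphi$ yields
\[
\overline{d\ \%\ 2} = \varphi\!\left(\prod \alpha_i\right) = \sum \varphi(\alpha_i) = \varphi\!\left(\sum \alpha_i\right) = \overline{b\ \%\ 2},
\]
which is the desired congruence $b \equiv d \pmod 2$.

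There is no real obstacle here; the only sanity check worth explicitly flagging is the closure-under-multiplication step that justifies the induction, and the use of Lemma 9.1 to convert ``not divisible by $\sqrt{2}$'' into ``odd $a$-part'' so that the cross-term $a_1 b_2 + a_2 b_1$ collapses to $b_1 + b_2$ modulo $2$. Everything else is routine arithmetic in $\mathbb{Z}[\sqrt{2}]$.
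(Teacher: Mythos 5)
Your proposal is correct and follows essentially the same route as the paper: the same map $\varphi(a+b\sqrt{2})=\overline{b\ \%\ 2}$, unconditional additivity, and conditional multiplicativity on elements not divisible by $\sqrt{2}$ via Lemma 9.1. The only differences are cosmetic --- you verify multiplicativity by direct expansion of $(a_1+b_1\sqrt{2})(a_2+b_2\sqrt{2})$ rather than by checking residues in $\{1,1+\sqrt{2}\}$, and you explicitly note the closure under multiplication that justifies the induction, a point the paper leaves implicit.
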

\begin{proof}
    We again create a strange homomorphism. Let $\varphi(a+b\sqrt{2})=\overline{b\ \%\ 2}\in\mathbb{F}_2$. It follows that $\varphi(\alpha+\beta)=\varphi(\alpha)+\varphi(\beta)$. We claim if $\sqrt{2}$ divides neither $\alpha$ nor $\beta$ then $\varphi(\alpha\beta)=\varphi(\alpha)+\varphi(\beta)$ as well. From the previous lemma, we see that the residues of such $\alpha$ and $\beta$ with coefficients in $\mathbb{F}_2$ are in $\{1,1+\sqrt{2}\}$. We check $\varphi$ by hand:
    $$0=\varphi(1)=\varphi(1\cdot1)=\varphi(1)+\varphi(1)=0+0=0$$
    $$1=\varphi(1+\sqrt{2})=\varphi(1\cdot(1+\sqrt{2}))=\varphi(1)+\varphi(1+\sqrt{2})=0+1=1$$
    $$0=\varphi(1)=\varphi((1+\sqrt{2})\cdot(1+\sqrt{2}))=\varphi(1+\sqrt{2})+\varphi(1+\sqrt{2})=1+1=0$$
    We end noting
    $$\overline{d\ \%\ 2}=\varphi\Big(\prod \alpha_i\Big)=\sum \varphi(\alpha_i)=\varphi\Big(\sum \alpha_i\Big)=\overline{b\ \%\ 2}.$$
\end{proof}

It's probable that if the author knew more about ring isomorphisms, the results of this section and those of Section 7 could have been demonstrated simultaneously. 
\begin{theorem}
    For every $\mu\in\mathbb{Z}[\sqrt{2}]$ which factors into non-units there exists a non-trivial minimal bioperationl multiset over $\mathbb{Z}[\sqrt{2}]$ with a sum-product of $\mu$.
\end{theorem}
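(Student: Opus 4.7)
The plan is to mirror the proof of Theorem 7.3 for the Gaussian integers almost verbatim, with $T_{\pm 2\sqrt{2}}$ playing the role of $T_{\pm 2i}$ and Lemma 9.2 playing the role of Lemma 7.2. Given a factorization $\mu = \alpha_1 \cdots \alpha_n$ with at least two non-unit factors, start from $S = \{\alpha_i\}_{i=1}^n$ and plan to bioperate $S$ by appending copies of $T_{\pm 1}$ (from Section 4) and $T_{\pm 2\sqrt{2}}$. These appendages can shift $\sigma(S)$ by arbitrary amounts in the rational coordinate but only by multiples of $2$ in the $\sqrt{2}$ coordinate, so a parity condition on $\pi(S) - \sigma(S)$ governs whether the bioperation goes through directly.

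Split into two cases. Writing $\sigma(S) = a + b\sqrt{2}$ and $\pi(S) = c + d\sqrt{2}$, Case 1 is $b \equiv d \pmod{2}$: append copies of $T_{\pm 2\sqrt{2}}$ to match the $\sqrt{2}$-coefficients and then copies of $T_{\pm 1}$ to match the rational parts, producing a bioperational $S'$ with sum-product $\mu$. Case 2 is $b \not\equiv d \pmod{2}$, where by the contrapositive of Lemma 9.2 some $\alpha_j$ must be divisible by $\sqrt{2}$; the goal is to modify $S$ so that the $\sqrt{2}$-coefficient of $\sigma(S)$ flips parity while $\pi(S)$ is preserved, thereby reducing to Case 1.

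The main obstacle is constructing that parity-flipping substitution; it is the analog of the trick $\alpha_j \mapsto \{i\alpha_j, i, -1\}$ from the Gaussian proof. I would exploit the unit relation $(1+\sqrt{2})(-1+\sqrt{2}) = 1$ and replace $\alpha_j$ in $S$ with the pair $\{(1+\sqrt{2})\alpha_j,\ -1+\sqrt{2}\}$. The product is preserved since $(1+\sqrt{2})(-1+\sqrt{2}) = 1$. Writing $\alpha_j = a_j + b_j\sqrt{2}$, Lemma 9.1 forces $a_j$ to be even (because $\sqrt{2} \mid \alpha_j$), and a short computation shows the change in $\sigma$ equals $(2b_j - 1) + (a_j + 1)\sqrt{2}$, so the $\sqrt{2}$-coefficient shifts by the odd integer $a_j + 1$, flipping its parity as required.

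To close, minimality is obtained by shaving off any sum-product-neutral combinations such as $T_0 = \{1,1,-1,-1\}$ from the resulting bioperational $S'$. Non-triviality is automatic: the non-unit factors cannot be trimmed without altering the product, and we arranged from the start for $S$ to contain at least two non-units, a count which the Case 2 replacement preserves since it only rescales $\alpha_j$ by a unit.
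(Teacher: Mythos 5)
Your proposal is correct and follows essentially the same route as the paper's own proof: the same case split on the parity of the $\sqrt{2}$-coefficients of $\sigma(S)$ and $\pi(S)$, the same appeal to Lemma 9.2 to locate an $\alpha_j$ divisible by $\sqrt{2}$, and the identical substitution $\alpha_j \mapsto \{(1+\sqrt{2})\alpha_j,\ -1+\sqrt{2}\}$ with the same computation $(2y-1)+(x+1)\sqrt{2}$ for the change in the sum. Your closing remarks on minimality and non-triviality are, if anything, slightly more explicit than the paper's.
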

\begin{proof}
    Pick a factorization $\mu=\alpha_1...\alpha_n$ and let $S=\{\alpha_i\}_{i=1}^n,\ a+b\sqrt{2}=\sigma(S),$ and $c+d\sqrt{2}=\pi(S)$. If $b$ and $d$ have the same parity, $S$ can be bioperated into the desired result. If not, we may pick some $\alpha_j$ a multiple of $\sqrt{2}$. Letting
    $$S'=\{\alpha_i\}_{i\not=j}+\{(1+\sqrt{2})\alpha_j,-1+\sqrt{2}\}$$
    Letting $\alpha_j=x+y\sqrt{2}$, the change $\sigma(S)$ is
    $$\sigma(S')-\sigma(S)=(x+y\sqrt{2})(1+\sqrt{2})+(-1+\sqrt{2})-(x+y\sqrt{2})=(2y-1)+(x+1)\sqrt{2}.$$
    But from Lemma 9.1, we may suppose that $x$ is even and that therefore $\sigma(S')$ and $\sigma(S)$ have $\sqrt{2}$ coefficients of different parity. It follows that $\sigma(S')$ and $\pi(S')=\pi(S)$ have $\sqrt{2}$ coefficients of the same parity and that $S'$ can therefore be bioperated into the desired result.
\end{proof}

\section{Generalization and Open Problems}
Let's start this section by bundling up our main theorems into a single statement
\begin{theorem}
    If $R$ is one of $\mathbb{N},\mathbb{Z},\mathbb{Z}[i],\mathbb{Z}[\omega],$ or $\mathbb{Z}[\sqrt{2}]$ then for every $\mu\in R$ which factors into non-units, there exists a non-trivial minimal bioperational multiset over $R$ with a sum-product of $\mu$.
\end{theorem}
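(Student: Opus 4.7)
The plan is to treat this statement as a straightforward case analysis, since it is literally a packaging of Theorems 3.4, 4.1, 7.3, 8.1, and 9.3. For a given $\mu \in R$ that factors into non-units, I would split on which of the five rings $R$ is, and then quote the matching theorem directly. Each of those theorems already produces a non-trivial bioperational multiset over $R$ with sum-product $\mu$, so no new construction is needed; the ``proof'' is a one-line invocation per ring.

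The one gap to plug is in the $\mathbb{N}$ case: Theorem 3.4 is stated only for non-trivial (not explicitly minimal) bioperational multisets. To close it, I would appeal to the remark opening Section 4 -- namely, that every non-vanishing bioperational multiset over $\mathbb{N}$ is automatically minimal. A direct check suffices: removing a $1$ from such a multiset decreases $\sigma$ while fixing $\pi$, and removing any element $\ge 2$ strictly decreases $\pi$, so no proper sub-multiset of a bioperational multiset in $\mathbb{N}$ can share its sum-product. Hence the multiset built in Theorem 3.4 is already minimal, matching the wording of Theorem 10.1. A second bookkeeping task is to reconcile hypotheses: Theorem 3.4 uses ``composite'' rather than ``factors into non-units,'' but in $\mathbb{N}$ the only unit is $1$ so the two phrases agree; similarly for $\mathbb{Z}$, where units are $\pm1$.

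The hardest part -- which I do not plan to attempt here -- would be to give a genuinely unified proof instead of a five-way case split. All five rings admit a short list of appendages that fix the product and perturb the sum by controlled amounts, and all five arguments reduce to showing these appendages can close any sum-product gap, sometimes after a preliminary parity correction (as in Sections 7 and 9). A single abstract statement along those lines would be attractive, but building the right framework -- phrased in terms of the unit group of $R$ together with a mod-$2$ homomorphism detecting the obstruction -- is more work than the present bundled theorem needs, so I would stick with the case-by-case verification.
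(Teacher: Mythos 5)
Your proposal matches the paper's proof exactly: Theorem 10.1 is proved there by simply citing Theorems 3.4, 4.1, 7.3, 8.1, and 9.3. Your extra step of patching the $\mathbb{N}$ case is a genuine improvement in rigor --- Theorem 3.4 only asserts non-triviality, and your observation that removing any element of a non-vanishing multiset over $\mathbb{N}$ strictly decreases the sum (so no proper sub-multiset can share the sum-product) correctly justifies the minimality claim that the paper only asserts in passing at the start of Section 4.
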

\begin{proof}
    Theorems 3.4, 4.1, 7.3, 8.1, 9.3.
\end{proof}

Some open problems of interest:
\begin{itemize}
    \item Does Theorem 10.1 hold over the quaternions?\\
    Order of multiplication now matters. We at least have $T_{\pm2i},T_{\pm2j},$ and $T_{\pm2k}$ at our disposal since
    $$T_{\pm2v}=(v,v,-1,1)$$
    has a product of $1$ for $v\in\{i,j,k\}$.
    \item Does Theorem 10.1 hold for all integer rings of real quadratic number fields?\\
    There are families of such rings that admit easy attack. Take for instance $\mathbb{Z}[\sqrt{t^2\pm1]}$. From
    $$(t+\sqrt{t^2\pm1})(t-\sqrt{t^2\pm1})=\mp1$$
    we can construct appendages $T_{\pm 2\sqrt{t^2\pm 1}}$ which gives us pretty good flexibility for bioperation. And in general, for $d=t(b^2t\pm 2)$ we can construct appendages $T_{\pm2b\sqrt{d}}$. The first values not covered by these parametrizations are
    $$13,19,21,22,28,29,31,33,39,41,43,44,45,46,52,53,54,55,57,58,59,61,67,69,...$$
    Perhaps $\mathbb{Z}[\sqrt{13}]$, which has a relatively large fundamental unit, is our first example for which Theorem 10.1 fails. One would think it easy to construct a counter-example ring to the theorem. However the handful of examples the author toyed with proved dead ends.
\end{itemize}

\end{document}